\newtheorem{thm}{Theorem}[section]
\newtheorem{lem}[thm]{Lemma}
\theoremstyle{definition}
\newtheorem{df}[thm]{Definition}
\numberwithin{equation}{section}
\renewcommand{\phi}{\varphi}
\newcommand{\N}{\mathbb{N}}
\newcommand{\Z}{\mathbb{Z}}
\title{An absorption theorem for \\
minimal AF equivalence relations on Cantor sets}
\author{Hiroki Matui 
\thanks{Supported in part by a grant 
from the Japan Society for the Promotion of Science} \\
Graduate School of Science \\
Chiba University \\
1-33 Yayoi-cho, Inage-ku, Chiba 263-8522, Japan}
\date{}
\begin{document}
\maketitle

\begin{abstract}
We prove that 
a `small' extension of a minimal AF equivalence relation 
on a Cantor set is orbit equivalent to the AF relation. 
By a `small' extension we mean an equivalence relation 
generated by the minimal AF equivalence relation and 
another AF equivalence relation 
which is defined on a closed thin subset. 
The result we obtain is 
a generalization of the main theorem in \cite{GMPS2}. 
It is needed 
for the study of orbit equivalence of minimal $\Z^d$-systems 
for $d>2$ \cite{GMPS3}, 
in a similar way 
as the result in \cite{GMPS2} was needed (and sufficient) 
for the study of minimal $\Z^2$-systems \cite{GMPS1}. 
\end{abstract}

\section{Introduction}

In the present paper we study equivalence relations on Cantor sets. 
By a Cantor set, 
we mean a compact, metrizable and totally disconnected space 
without isolated points. 
The topological orbit structure of countable group actions 
as homeomorphisms on Cantor sets has been studied by several authors 
\cite{GPS1}, \cite{GMPS1}. 
More precisely, minimal $\Z$-actions and $\Z^2$-actions on Cantor sets 
have been classified up to orbit equivalence. 
The strategy is to prove that 
the equivalence relation associated with the given minimal action 
is orbit equivalent to an AF relation (see Definition \ref{AF}). 
To prove this, we need a delicate `glueing' procedure, 
an essential part of which is done by the absorption theorem 
(\cite[Theorem 4.18]{GPS2}, \cite[Theorem 4.6]{GMPS2}). 
Indeed, the result in \cite{GMPS2} was sufficient 
for the study of orbit equivalence of minimal $\Z^2$-actions \cite{GMPS1}. 
The aim of this paper is 
to prove a stronger version of the absorption theorem, 
which is needed 
for the study of minimal $\Z^d$-actions for $d>2$ \cite{GMPS3}. 
We refer to \cite{GPS2} and \cite{GMPS2} 
as both background and reference for specific results 
that we shall need in the sequel. 

We will give a brief description of 
how a strengthening of the absorption theorem is needed 
in order to generalize the results for minimal $\Z^2$-actions 
to minimal $\Z^d$-actions. 
Let $\phi$ be a minimal free $\Z^d$-action on a Cantor set. 
For the associated equivalence relation $R_\phi$, 
we will construct an increasing sequence of subrelations 
$R_0\subset R_1\subset\dots\subset R_d=R_\phi$ 
so that $R_0$ is a minimal AF equivalence relation 
with the relative topology from $R_\phi$ 
and each $R_i$ is a `small' extension of $R_{i-1}$. 
Then, we apply inductively the absorption theorem to $R_{i-1}\subset R_i$ 
and show that each $R_i$ is orbit equivalent to an AF relation 
for $i=1,2,\dots,d$. 
In such a way, after $d$-times use of the absorption theorem, 
we can conclude that $R_d=R_\phi$ is orbit equivalent to an AF relation 
and thus complete the classification up to orbit equivalence. 
One of the problems in this argument is 
to describe the difference between $R_{i-1}$ and $R_i$. 
In the case of $d=2$, 
we could find another compact relation $K_i$ 
which is (locally) transverse to $R_{i-1}$ 
so that $R_i$ is generated by $R_{i-1}$ and $K_i$ (see \cite{GMPS1}). 
For $d>2$, however, we cannot find such a nice transverse relation, 
and so it is necessary to generalize the absorption theorem in \cite{GMPS2}. 
The new absorption theorem (Theorem \ref{main2}) in this paper 
does not need transverse relations 
and that is what is needed for the study of $\Z^d$-actions. 

We collect notation and terminology relevant to this paper. 
Let $X$ be a compact, metrizable and totally disconnected space 
and let $R\subset X\times X$ be an equivalence relation 
(we may call an equivalence relation just a relation). 
For a subset $A\subset X$, 
we set 
\[
R[A]=\{x\in X\mid
\text{there exists }y\in A\text{ such that }(x,y)\in R\}. 
\]
The set $R[A]$ is called the $R$-saturation of $A$. 
For $x\in X$, we denote $R[\{x\}]$ by $R[x]$ and 
call it the $R$-orbit of $x$. 
We deal with only an equivalence relation with countable orbits 
(i.e. $R[x]$ is at most countable for each $x\in X$). 
When $R[x]$ is dense in $X$ for each $x\in X$, 
we say that $R$ is minimal. 
For a subset $A\subset X$, 
we denote $R\cap(A\times A)$ by $R|A$ and call it the restriction. 
When $R$ and $S$ are relations on $X$, 
we let $R\vee S$ denote the equivalence relation on $X$ 
generated by $R$ and $S$. 

Suppose that $R$ is equipped with a topology 
in which $R$ is \'etale (\cite[Definition 2.1]{GPS2}). 
A closed subset $Y\subset X$ is called $R$-\'etale, 
if the restriction $R|Y=R\cap(Y\times Y)$ 
with the relative topology from $R$ is \'etale. 
A subset $Y\subset X$ is called $R$-thin, 
if $\mu(Y)$ is zero 
for any $R$-invariant probability measure $\mu$ on $X$. 

We collect several basic facts about \'etale equivalence relations. 
The reader should see \cite{GPS2} and \cite{GMPS2}. 
Let $R$ be an \'etale relation on a Cantor set $X$. 
If $O\subset X$ is open, then its $R$-saturation $R[O]$ is also open. 
If $R$ is compact, then the topology on $R$ coincides with 
the topology from the product topology of $X\times X$. 
If $R$ is compact and $O\subset X$ is clopen, then 
the $R$-saturation $R[O]$ is also clopen (and hence compact). 
One can easily show that 
a subrelation $S$ of $R$ is \'etale 
with respect to the relative topology from $R$ 
if and only if $S$ is an open subset of $R$. 
If $\mu(Y)=0$ for a Borel subset $Y$ of $X$ and 
an $R$-invariant probability measure $\mu$, then 
$\mu(R[Y])$ is also zero. 

\bigskip

The following is the definition of AF equivalence relations. 

\begin{df}[{\cite[Definition 3.7, 4.1]{GPS2}}]\label{AF}
An \'etale equivalence relation $R$ is called an AF relation, 
if there exists an increasing sequence $R_1\subset R_2\subset\dots$ 
of compact open subrelations of $R$ such that 
$R=\bigcup_{n\in\N}R_n$. 
An equivalence relation $R$ is said to be affable, 
if $R$ is orbit equivalent to an AF relation. 
\end{df}

We have to recall the notion of Bratteli diagrams. 
A Bratteli diagram $(V,E)$ consists of 
a vertex set $V$ and an edge set $E$, 
where $V$ and $E$ can be written 
as a countable disjoint union of non-empty finite sets: 
\[
V=V_0\cup V_1\cup V_2\cup\dots 
\text{ and }E=E_1\cup E_2\cup E_3\cup\dots
\]
with the following property: 
An edge $e$ in $E_n$ goes from a vertex in $V_{n-1}$ to one in $V_n$, 
which we denote by $s(e)$ and $r(e)$, respectively. 
We require that there are no sinks, 
i.e. $s^{-1}(v)\neq\emptyset$ for all $v\in V$. 
If $(V,E)$ has only one source $v_0\in V$
---which necessarily entails $V_0=\{v_0\}$---
we will call $(V,E)$ a standard Bratteli diagram. 

For a standard Bratteli diagram $(V,E)$, 
\[
X_{(V,E)}=\left\{(e_1,e_2,\dots)\in\prod_{n\in\N}E_n\mid
r(e_n)=s(e_{n+1})\text{ for all }n\in\N\right\}
\]
is called the infinite path space. 
Equipped with the relative topology from $\prod_{n\in\N}E_n$, 
$X_{(V,E)}$ is compact, metrizable and totally disconnected. 
For every $n\in\N$, let 
\[
R_n=\{(e,f)\in X_{(V,E)}\times X_{(V,E)}\mid
e_k=f_k\text{ for all }k>n\}, 
\]
where $e_k$ and $f_k$ denote the $k$-th edge of $e$ and $f$, 
respectively. 
Give $R_n$ the relative topology from $X_{(V,E)}\times X_{(V,E)}$. 
Then $R_n$ is a compact \'etale equivalence relation. 
Let 
\[
AF(V,E)=\bigcup_nR_n
\]
and give $AF(V,E)$ the inductive limit topology, 
so that $AF(V,E)$ is an AF equivalence relation. 

It is known that 
$AF(V,E)$ is the prototype of an AF relation. 
More precisely, for any AF equivalence relation $R$ 
on a compact, metrizable totally disconnected space $X$, 
there exists a standard Bratteli diagram $(V,E)$ such that 
$R$ is isomorphic to $AF(V,E)$ (\cite[Theorem 3.9]{GPS2}). 

We need the following lemma in the next section. 
We have been unable to find a suitable reference 
in the literature, and so we include a proof for completeness. 

\begin{lem}\label{proper}
Let $X$ be a compact metrizable totally disconnected space. 
Suppose $R$ and $S$ are compact \'etale equivalence relations on $X$. 
If $S$ is contained in $R$, 
then there exists a finite set $K$ and 
a continuous map $\mu:X\to K$ such that 
\[
S=\{(x,x')\in R\mid\mu(x)=\mu(x')\}. 
\]
\end{lem}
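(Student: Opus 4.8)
The plan is to produce $\mu$ as the ``coloring'' associated with a finite clopen partition of $X$ whose blocks are unions of $S$-classes. Two preliminary observations set the stage. First, since both $R$ and $S$ are compact and \'etale, each carries the topology inherited from $X\times X$, so $S$, regarded with the relative topology from $R$, is \'etale and therefore open in $R$; consequently $T:=R\setminus S$ is a closed, hence compact, subset of $R$. Second, every pair $(x,x')\in T$ satisfies $(x,x')\in R$ but $(x,x')\notin S$, so $x$ and $x'$ lie in distinct $S$-classes. The function $\mu$ we seek must be constant on $S$-classes (so that $S\subseteq\{(x,x')\in R\mid\mu(x)=\mu(x')\}$) and must take different values at the two ends of every pair in $T$ (so that the reverse inclusion holds). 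Thus it suffices to find a finite clopen partition $X=P_1\sqcup\dots\sqcup P_m$ into $S$-saturated sets such that no pair of $T$ has both coordinates in a single $P_j$, and then set $\mu(x)=j$ for $x\in P_j$.

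The key step is a separation statement: if $A,B\subseteq X$ are disjoint, closed, $S$-saturated sets, then there is an $S$-saturated clopen set $U$ with $A\subseteq U\subseteq X\setminus B$. To prove it I would first choose, by total disconnectedness of $X$, a clopen $U_0$ with $A\subseteq U_0\subseteq X\setminus B$, and then \emph{erode} it by putting $U=U_0\setminus S[X\setminus U_0]$. Because $S$ is compact, the saturation $S[X\setminus U_0]$ of the clopen set $X\setminus U_0$ is again clopen, so $U$ is clopen; and $x\in U$ holds exactly when $S[x]\subseteq U_0$, a condition depending only on the class $S[x]$, so $U$ is $S$-saturated. It still contains $A$ (each $S$-class meeting $A$ lies in $A\subseteq U_0$) and still avoids $B$. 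I expect this erosion argument---equivalently, the fact that the quotient $X/S$ is totally disconnected---to be the main obstacle, since it is exactly here that compactness of $S$ is indispensable.

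With the separation statement in hand, for each $(x,x')\in T$ the sets $S[x]$ and $S[x']$ are disjoint, $S$-saturated, and closed (because $S$ is compact), so they yield an $S$-saturated clopen $U_{(x,x')}$ containing $x$ but not $x'$; hence $(x,x')$ lies in the clopen rectangle $U_{(x,x')}\times(X\setminus U_{(x,x')})$. These rectangles cover the compact set $T$, so finitely many of them, coming from $S$-saturated clopen sets $U_1,\dots,U_n$, already cover $T$. I would then pass to the finite Boolean algebra of clopen subsets of $X$ generated by $U_1,\dots,U_n$; its atoms $P_1,\dots,P_m$ form a clopen partition of $X$, and each atom, being an intersection of the $U_i$ and their complements, is $S$-saturated.

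Finally I would verify the two inclusions for the map defined by $\mu(x)=j$ when $x\in P_j$. This $\mu$ is continuous with finite range and is constant on $S$-classes, whence $S\subseteq\{(x,x')\in R\mid\mu(x)=\mu(x')\}$. Conversely, suppose $(x,x')\in R$ with $\mu(x)=\mu(x')$ but $(x,x')\notin S$; then $(x,x')\in T$, so it lies in one of the chosen rectangles $U_i\times(X\setminus U_i)$, giving $x\in U_i$ and $x'\notin U_i$. But $x$ and $x'$ lie in a common atom $P_j$, and $P_j$ is wholly contained in $U_i$ or wholly in $X\setminus U_i$---a contradiction. Hence $(x,x')\in S$, which completes the argument.
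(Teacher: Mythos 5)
Your proof is correct, and it takes a genuinely different route at the crucial step. The paper also begins with the observation that $S$ is open in $R$ (so $R\setminus S$ is compact) and also finishes with a compactness argument producing finitely many ``separating'' objects; but where you diverge is in how the separating objects are obtained. The paper passes to the quotient space $Y=X/S$, cites Proposition 3.2 of \cite{GPS2} for the fact that $Y$ is compact, metrizable and totally disconnected, and then separates points of $R\setminus S$ using integer-valued continuous functions $f\in C(Y,\Z)$, setting $R_f=\{(x,x')\in R\mid f(\pi(x))=f(\pi(x'))\}$ and extracting a finite subfamily with $S=\bigcap_{f\in A}R_f$ by compactness; the map $\mu(x)=(f(\pi(x)))_{f\in A}$ is then the desired coloring. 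You instead stay inside $X$ and prove the needed separation directly: your erosion construction $U=U_0\setminus S[X\setminus U_0]$, which uses only the fact (stated in the paper's preliminaries) that the $S$-saturation of a clopen set is clopen when $S$ is compact \'etale, produces $S$-saturated clopen sets separating distinct $S$-classes, and the atoms of the Boolean algebra generated by finitely many such sets give the partition defining $\mu$. The two arguments are morally the same---an $S$-saturated clopen set is exactly the pullback of a clopen subset of $X/S$, so your erosion lemma is in essence a hands-on proof that the quotient is totally disconnected---but yours is self-contained, replacing the citation to \cite{GPS2} with an explicit elementary construction, while the paper's is shorter at the cost of outsourcing the key topological fact about the quotient.
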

\begin{proof}
First, we note that $S$ is automatically open in $R$ 
(see the comment following Definition 3.7 in \cite{GPS2} for example). 

Let $Y$ be the quotient space of $X$ by the relation $S$. 
From Proposition 3.2 of \cite{GPS2} and its proof, 
we can see that $Y$ is compact, metrizable and totally disconnected. 
Let us denote the quotient map by $\pi$. 

For $f\in C(Y,\Z)$, we define 
\[
R_f=\{(x,x')\in R\mid f(\pi(x))=f(\pi(x'))\}. 
\]
It is easy to see that $R_f$ is a closed subset of $R$ and 
that $S$ is contained in $R_f$. 
If $(x,x')$ does not belong to $S$, then 
there exists $f\in C(Y,\Z)$ such that $f(\pi(x))\neq f(\pi(x'))$. 
Hence we have 
\[
S=\bigcap_{f\in C(Y,\Z)}R_f. 
\]
Since $S$ is open in $R$ and $R$ is compact, 
there exists a finite subset $A\subset C(Y,\Z)$ such that 
\[
S=\bigcap_{f\in A}R_f. 
\]
Put $K=\{(f(y))_{f\in A}\in\Z^A\mid y\in Y\}$ and 
define $\mu:X\to K$ by $\mu(x)=(f(\pi(x)))_{f\in A}$. 
It is easy to see that 
$K$ and $\mu$ have the desired properties. 
\end{proof}

\section{A splitting theorem}

Let $R$ be a minimal AF equivalence relation 
on a Cantor set $X$ and 
let $Y\subset X$ be a closed, $R$-\'etale and $R$-thin subset. 
By Theorem 3.11 of \cite{GPS2}, 
$R|Y=R\cap(Y\times Y)$ with the relative topology is 
an AF equivalence relation on $Y$. 
Suppose that we are given an equivalence relation $S$ on $Y$ 
and that $S$ is an open subset of $R|Y$. 
Note that $S$ in the relative topology from $R$ is also 
an AF equivalence relation on $Y$ by \cite[Proposition 3.12 (ii)]{GPS2}. 

We would like to prove the following theorem in this section. 

\begin{thm}\label{main1}
In the setting above, 
there exists an equivalence relation $R'$ on $X$ 
which satisfies the following. 
\begin{enumerate}
\item $R'$ is an open subset of $R$. 
\item $R'$ is minimal. 
\item $R'|Y$ is equal to $S$. 
\item $R'[Y]$ is equal to $R[Y]$. 
\item If $x\in X$ does not belong to $R[Y]$, then $R'[x]=R[x]$. 
\item Any $R'$-invariant probability measure on $X$ is $R$-invariant. 
\end{enumerate}
\end{thm}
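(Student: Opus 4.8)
The plan is to realize $R'$ as an increasing union of compact open subrelations of $R$, obtained from an AF filtration of $R$ by a level-by-level surgery localized over $Y$. Since $R$ is AF, I write $R=\bigcup_n R_n$ with $R_n$ compact open and increasing. Because $Y$ is $R$-\'etale, each $R_n|Y$ is compact open in $R|Y$ and $R|Y=\bigcup_n R_n|Y$. Since $S$ is AF in the relative topology (by \cite[Proposition 3.12 (ii)]{GPS2}), I write $S=\bigcup_m T_m$ with $T_m$ compact open and increasing; as each $T_m$ is compact and contained in $\bigcup_n R_n|Y$, after passing to a subsequence I may assume $T_m\subseteq R_m|Y$ for every $m$. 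Thus on $Y$ I have two increasing towers $T_m\subseteq R_m|Y$, and the goal becomes to thicken each $T_m$ to a compact open $R'_m\subseteq R$ on all of $X$ whose trace on $Y$ is exactly $T_m$.

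The core is the passage from $R_m$ to $R'_m$. Inside each $R_m$-class, the points lying in $Y$ are refined from a single $R_m|Y$-class into several $T_m$-classes; I would then redistribute the points of the class lying outside $Y$ among these $T_m$-classes and, where necessary, attach them to orbit segments running through the bulk $X\setminus Y$, so as to keep $R'_m$ a compact open subrelation of $R$ (which may require realizing the reattachments inside a later $R_N$), to make the assignment clopen and monotone in $m$, and to leave untouched every $R_m$-class that does not meet $Y$. This is cleanest in the Bratteli model: represent $R=AF(V,E)$ with $Y$ the path space of a thin subdiagram $(W,F)$ and $R|Y=AF(W,F)$ (via \cite[Theorem 3.11]{GPS2} and its proof), and perform the cutting and rerouting by splitting and redirecting the edges over $W$. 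Setting $R'=\bigcup_m R'_m$ gives an \'etale subrelation of $R$, so (1) holds, and since the trace of $R'_m$ on $Y$ is $T_m$ with $\bigcup_m T_m=S$, property (3) holds.

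Property (5) is built into the surgery: if the $R$-orbit of $x$ misses $Y$ entirely then no $R_m$-class of $x$ meets $Y$, so none is ever modified and $R'[x]=R[x]$. For (4) the inclusion $R'[Y]\subseteq R[Y]$ is clear, and the reverse holds because the redistribution never strands a point away from $Y$: every point of a modified class remains $R'$-equivalent to some point of $Y$, so each $x\in R[Y]$ still reaches $Y$ under $R'$. Minimality (2) is where the bulk-rerouting is used: off $R[Y]$ the orbits are unchanged and hence dense, while on $R[Y]$ each $R'$-orbit is arranged, using the simplicity of the diagram (equivalently minimality of $R$), to enter every clopen set and so to be dense as well.

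Finally, properties (3)--(5) force the identity $R=R'\vee(R|Y)$: any $R$-equivalent pair inside $R[Y]$ can be connected by first moving each coordinate into $Y$ through $R'$ and then joining the two resulting points of $Y$ by $R|Y$. Consequently an $R'$-invariant measure $\mu$ is $R$-invariant as soon as it is $R|Y$-invariant, and the latter is automatic once $\mu(Y)=0$; so (6) reduces to showing $\mu(Y)=0$ for every $R'$-invariant $\mu$. This is exactly where the $R$-thinness of $Y$ is consumed: the surgery is to be carried out so that for each $k$ the set $Y$ admits $k$ pairwise disjoint $R'$-translates (these copies being available precisely because $Y$ is thin and $R$ is minimal, cf.\ the comparison theory in \cite{GPS2}), whence $k\,\mu(Y)\le 1$ for all $k$ and $\mu(Y)=0$. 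I expect the main obstacle to be the surgery itself: performing the cut-to-$T_m$ and the reattachment simultaneously, clopenly, and monotonically in $m$, so that the limit relation is at once open in $R$, equal to $S$ on $Y$, minimal, and thin over $Y$ for its own invariant measures --- with the thinness of $Y$ providing both the room for all the rerouting and the measure control.
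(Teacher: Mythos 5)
Your proposal reproduces the overall architecture of the paper's proof (Bratteli model for $(R,Y)$, telescoping so that the filtration $S_n$ of $S$ satisfies $S_n\subset R_n|Y$, level-by-level splitting of $R_n$ into a compact open $R_n'$ whose trace on $Y$ is $S_n$, then $R'=\bigcup_n R_n'$), and your reductions for (4)--(6) are sound: in particular, reducing (6) to the statement that $Y$ is $R'$-thin, via $R=R'\vee(R|Y)$, is exactly what the paper does. But the core of the theorem --- the actual construction of the splitting --- is absent, and you say so yourself (``I expect the main obstacle to be the surgery itself''). That obstacle \emph{is} the theorem. The paper resolves it with three specific devices you do not supply: (a) a counting lemma exploiting $R$-thinness of $Y$ (via Lemma 4.12 of \cite{GPS2}) which, after telescoping, yields surjections $\rho_w$ from the non-subdiagram edges into $w$ onto the set of subdiagram paths $F(v_0,w)$ --- this is what makes the ``redistribution'' of bulk points among $S_n$-classes possible at all, and simultaneously produces the disjoint copies needed for the measure estimate; (b) an encoding of each splitting by a continuous finite coloring $\lambda_n:X\to K_n$ (using the paper's Lemma \ref{proper}), so that openness of $R_n'$ in $R_n$ is automatic; and (c) a compactness argument to choose clopen neighborhoods $U_n\supset Y$ on which the extension of the coloring at level $n$ is consistent with the coloring at level $n-1$, which is what makes the tower monotone.

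Moreover, one assertion in your plan is not merely unproven but would fail as stated: you claim property (5) is ``built into the surgery'' because the surgery ``leaves untouched every $R_m$-class that does not meet $Y$.'' A splitting that is nontrivial precisely on the classes meeting $Y$ and trivial on all others cannot be open in $R_m$: the saturation $R_m[Y]$ is closed but not open (in the Bratteli picture it is the set of paths lying in $F$ beyond level $m$), so a locally constant labeling cannot distinguish it from its complement. The splitting must necessarily spill over onto a clopen neighborhood of $Y$; the paper's construction splits classes meeting the clopen sets $U_m\supset Y$, and then recovers your desired property only in the limit, via the condition $\bigcap_{m\geq n}R_m[U_m]=R_n[Y]$ (property (3) of Lemma \ref{KlmdU}): given $x\notin R[Y]$ and $(x,x')\in R_n$, one finds $m\geq n$ with $x,x'\notin R_m[U_m]$, where the coloring is constant, so $(x,x')\in R_m'$. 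Without this neighborhood mechanism your scheme is internally inconsistent (openness versus untouched classes), and with it, properties (2), (5), (7) of the construction all have to be re-proved relative to the $U_n$ rather than $Y$ --- which is exactly the delicate content of the paper's Lemma \ref{KlmdU}.
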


The property (3) of the above theorem means that, 
for every $y\in Y$, 
its $R$-orbit $R[y]$ splits into several $R'$-orbits and 
$R'[y]\cap Y$ equals $S[y]$. 
But, the property (5) means that 
if $R[x]$ does not meet $Y$, then $R[x]$ does not split. 
Note that (4) and (5) imply $R=R'\vee(R|Y)$. 
\bigskip

At first, 
we need to represent the AF equivalence relation $R$ on $X$ 
by a Bratteli diagram. 
By Theorem 3.11 of \cite{GPS2}, 
there exists a standard Bratteli diagram $(V,E)$, 
a subdiagram $(W,F)$ (i.e. $W\subset V$, $F\subset E$) 
satisfying $r(F)\cup\{v_0\}=W$ 
and a homeomorphism $\pi:X\to X_{(V,E)}$ 
such that the following are satisfied. 
\begin{itemize}
\item $\pi\times\pi$ induces an isomorphism from $R$ to $AF(V,E)$. 
\item $\pi(Y)$ is equal to 
$\{(e_n)_n\in X_{(V,E)}\mid e_n\in F\text{ for all }n\in\N\}$. 
\end{itemize}
Note that $\pi|Y\times\pi|Y$ induces 
an isomorphism between $R|Y$ and $AF(W,F)$. 
To simplify notation, 
we identify $X_{(V,E)}$ with $X$ and omit $\pi$. 
We remark that $(V,E)$ is a simple Bratteli diagram, 
because $R$ is minimal. 
Moreover, $(W,F)$ is a thin subdiagram of $(V,E)$, 
because $Y$ is $R$-thin in $X$. 

Let $R_0$ be the trivial relation on $X$, that is, 
$R_0=\{(x,x)\mid x\in X\}$. 
For $n\in\N$, we define 
\[
R_n=\{(x,x')\in X\times X\mid x_k=x'_k\text{ for all }k>n\}, 
\]
where $x_k$ and $x'_k$ denote the $k$-th edge of 
infinite paths $x$ and $x'$, respectively. 
Notice that $R_0\subset R_1\subset R_2\subset\dots$ and 
$R=\bigcup_nR_n$. 

Since $S$ is an AF relation, 
there exists an increasing sequence of compact open subrelations 
$S_1\subset S_2\subset S_3\subset\dots$ in $S$ such that 
$S=\bigcup_mS_m$. 
For any $m\in\N$, $S_m$ is contained in $R|Y$ and 
$R|Y$ is a union of open subsets $R_n|Y$. 
It follows from the compactness of $S_m$ that 
there exists an increasing sequence $n_1<n_2<\dots$ 
such that $S_m\subset R_{n_m}|Y$ for all $m\in\N$. 
By telescoping $(V,E)$ to levels $0<n_1<n_2<\dots$, 
we may assume that $S_n\subset R_n|Y=R_n\cap(Y\times Y)$ for all $n\in\N$. 

Let $v_0$ be the unique vertex in $V_0$. 
For $v\in V_n$ and $w\in V_m$ with $0\leq n<m$, 
we denote the set of paths in $(V,E)$ from $v$ to $w$ by $E(v,w)$. 
Let $F(v,w)$ be the set of paths $(e_1,e_2,\dots,e_{m-n})$ 
in $E(v,w)$ such that $e_i\in F$ for all $i=1,2,\dots,m-n$. 

\begin{lem}
There exists an increasing sequence of non-negative integers 
$\{n(k)\}_{k=0}^\infty$ with $n(0)=0$ such that 
\[
\lvert F(v_0,w)\rvert\leq
\sum_{v\in V_{n(k-1)}}\lvert E(v,w)\setminus F(v,w)\rvert
\]
for all $w\in W_{n(k)}$ and $k\in\N$. 
\end{lem}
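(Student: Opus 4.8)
The plan is to reduce the desired inequality to a statement about the asymptotic scarcity of $F$-paths and then establish that statement by a compactness argument producing an invariant measure that charges $R[Y]$. Throughout I fix the level $m=n(k-1)$ and write the right-hand side as $T(w)-T_F(w)$, where $T(w)=\sum_{v\in V_m}\lvert E(v,w)\rvert$ counts all paths from level $m$ to $w$ and $T_F(w)=\sum_{v\in W_m}\lvert F(v,w)\rvert$ counts those lying entirely in $F$. Since every $F$-path from $v_0$ to $w$ factors through a vertex $v\in W_m$ as an $F$-path, one has $\lvert F(v_0,w)\rvert=\sum_{v\in W_m}\lvert F(v_0,v)\rvert\,\lvert F(v,w)\rvert\le\beta_m T_F(w)$, where $\beta_m=\max_{v\in W_m}\lvert F(v_0,v)\rvert$ depends only on $m$. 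Hence the asserted inequality follows as soon as $T_F(w)/T(w)\le 1/(\beta_m+1)$ for every $w\in W_{n(k)}$. This dictates the inductive construction of $\{n(k)\}$: with $n(0)=0$ and $n(k-1)=m$ already chosen, set $\varepsilon=1/(\beta_m+1)$ and use the claim below to pick $n(k)>m$ so large that the fraction of $F$-paths from level $m$ is uniformly at most $\varepsilon$ at every vertex of $W_{n(k)}$.

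The key claim is: for each fixed $m$ and each $\varepsilon>0$ there is $n>m$ with $T_F(w)/T(w)\le\varepsilon$ for all $w\in W_n$. I would argue by contradiction, assuming there are $\varepsilon>0$ and, for every $n>m$, a vertex $w_n\in W_n$ with $T_F(w_n)\ge\varepsilon T(w_n)$. For $w\in V_n$ I introduce the probability measure $\nu_w$ on $X$ assigning to a cylinder $[p]$ (determined by a finite path $p$ from $v_0$) the weight $\lvert E(r(p),w)\rvert/\lvert E(v_0,w)\rvert$; this is the normalised counting measure on the finitely many paths from $v_0$ to $w$, and it is well defined since every vertex is reachable from the unique source $v_0$. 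Because $\nu_w([p])$ depends only on $r(p)$, each $\nu_{w_n}$ is invariant under $R_k$ for every $k<n$, so any weak-$*$ limit $\mu$ of a subsequence of $(\nu_{w_n})_n$ is invariant under all $R_k$ and hence $R$-invariant.

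To reach a contradiction I compare $T_F(w_n)/T(w_n)$ with the $\nu_{w_n}$-measure of the clopen set $A_n=\{x\in X\mid x_j\in F\text{ for }m<j\le n\}$. A direct count, using $\lvert E(v_0,v)\rvert\ge 1$ in the numerator and $\lvert E(v_0,v)\rvert\le\alpha_m:=\max_{v\in V_m}\lvert E(v_0,v)\rvert$ in the denominator, gives $\nu_{w_n}(A_n)\ge T_F(w_n)/(\alpha_m T(w_n))\ge\varepsilon/\alpha_m$. The sets $A_n$ decrease, as $n\to\infty$, to $Z_m=\{x\mid x_j\in F\text{ for all }j>m\}$, and $Z_m\subset R[Y]$, since any such $x$ is tail-equivalent to an element of $Y$ obtained by prepending an $F$-path from $v_0$ to $r(x_m)\in W_m$. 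As each $A_N$ is clopen and $A_n\subset A_N$ for $n\ge N$, passing to the limit yields $\mu(A_N)\ge\varepsilon/\alpha_m$ for every $N$, whence $\mu(R[Y])\ge\mu(Z_m)=\lim_N\mu(A_N)\ge\varepsilon/\alpha_m>0$. This contradicts $R$-thinness of $Y$, which forces $\mu(R[Y])=0$.

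The main obstacle is exactly this final passage. The quantity $T_F(w)/T(w)$ is an intrinsically finite-level count, so it cannot be identified directly with the measure of $R[Y]$; the work lies in the compactness step, where one must manufacture a genuine $R$-invariant probability measure from the combinatorial hypothesis and then transfer the uniform lower bound through the shrinking clopen sets $A_n$ to the limit. Verifying that the path-counting measures $\nu_{w_n}$ are $R_k$-invariant and that invariance survives the weak-$*$ limit is the technical heart; once that is in place, thinness (in the strengthened form $\mu(Y)=0\Rightarrow\mu(R[Y])=0$) closes the argument, and the reduction in the first paragraph delivers the lemma.
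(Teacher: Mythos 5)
Your proof is correct, and its combinatorial skeleton coincides with the paper's: your constant $\beta_m=\max_{v\in W_m}\lvert F(v_0,v)\rvert$ is exactly the paper's $L_{n(k-1)}$, and your reduction of the lemma to the ratio bound $T_F(w)\le T(w)/(\beta_m+1)$ at a deep enough level is the same chaining of inequalities the paper performs. The genuine difference is how that ratio bound is obtained. The paper simply invokes Lemma 4.12 of \cite{GPS2}, which converts $R$-thinness of $Y$ into the vertex-wise estimate $(L_{n(k-1)}+1)\lvert F(v,w)\rvert\le\lvert E(v,w)\rvert$ for all $v\in W_{n(k-1)}$ and all $w$ in a sufficiently deep level; this immediately implies your summed estimate. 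You instead prove the summed estimate from scratch: assuming it fails along every level, you form path-counting measures $\nu_{w_n}$, pass to a weak-$*$ limit to manufacture an $R$-invariant probability measure, and show it charges $R[Y]$, contradicting thinness combined with the fact (recorded in the paper's introduction) that saturations of null sets are null. In effect you have reproved the cited lemma of \cite{GPS2}, so your argument is self-contained where the paper leans on a reference --- a real merit, bought at the price of length. Two points deserve more careful phrasing, though neither is a gap: (i) the formula $\nu_w([p])=\lvert E(r(p),w)\rvert/\lvert E(v_0,w)\rvert$ is consistent only on cylinders of length at most $n$, so to get honest Borel measures on $X$ you should, say, average point masses at arbitrarily chosen infinite extensions of the paths in $E(v_0,w)$; and (ii) consequently $\nu_{w_n}$ itself need not be $R_k$-invariant as a measure on $X$ --- what is true is that two cylinders of length $j<n$ with the same terminal vertex and the same edges past level $k$ receive equal mass, and since for each fixed such pair this equality holds for all large $n$, it passes to the weak-$*$ limit and yields $R_k$-invariance of $\mu$ for every $k$, hence $R$-invariance. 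With these small repairs the contradiction argument closes exactly as you describe.
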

\begin{proof}
By Lemma 4.12 of \cite{GPS2}, we can find $n(1)\geq1$ such that 
$2\lvert F(v_0,w)\rvert\leq\lvert E(v_0,w)\rvert$ 
for all $w\in W_{n(1)}$, 
which means 
\[
\lvert F(v_0,w)\rvert\leq\lvert E(v_0,w)\setminus F(v_0,w)\rvert
\]
for all $w\in W_{n(1)}$. 

Put 
\[
L_n=\max_{v\in W_n}\lvert F(v_0,v)\rvert. 
\]
Let us find $n(2),n(3),n(4),\dots$ inductively. 
Suppose that $n(k-1)$ has been chosen. 
Since $Y$ is $R$-thin, by Lemma 4.12 of \cite{GPS2}, 
there exists $n(k)>n(k-1)$ such that 
\[
(L_{n(k-1)}+1)\lvert F(v,w)\rvert\leq\lvert E(v,w)\rvert
\]
for all $v\in W_{n(k-1)}$ and $w\in W_{n(k)}$. 
It follows that 
\begin{align*}
\lvert F(v_0,w)\rvert &\leq 
\sum_{v\in W_{n(k-1)}}L_{n(k-1)}\lvert F(v,w)\rvert \\
&\leq \sum_{v\in W_{n(k-1)}}\lvert E(v,w)\setminus F(v,w)\rvert \\
&\leq \sum_{v\in V_{n(k-1)}}\lvert E(v,w)\setminus F(v,w)\rvert 
\end{align*}
for any $w\in W_{n(k)}$. 
\end{proof}

From the lemma above, 
by telescoping $(V,E)$ to levels $0=n(0)<n(1)<n(2)<\dots$, 
we may assume that 
\[
\lvert F(v_0,w)\rvert\leq
\sum_{v\in V_{n-1}}\lvert E(v,w)\setminus F(v,w)\rvert
\qquad\text{for all }w\in W_n\text{ and }n\in\N. 
\]
Therefore, for $w\in W$, 
we can find a surjective map $\rho_w$ from 
$\{e\in E\setminus F\mid r(e)=w\}$ to $F(v_0,w)$. 

\begin{lem}\label{KlmdU}
There exist finite sets $K_n$, 
continuous maps $\lambda_n:X\to K_n$ and 
clopen subsets $U_n\subset X$ which satisfy the following. 
\begin{enumerate}
\item For every $n\in\N$, 
$S_n=\{(y,y')\in R_n\cap(Y\times Y)\mid
\lambda_n(y)=\lambda_n(y')\}$. 
\item For every $n\in\N$, 
$Y$ is contained in $U_n$. 
\item For every $n\in\N$, 
$\displaystyle\bigcap_{m\geq n}R_m[U_m]=R_n[Y]$. 
\item For every $n\in\N\setminus\{1\}$, 
if $(x,x')\in R_{n-1}$ and $\lambda_{n-1}(x)=\lambda_{n-1}(x')$, 
then $\lambda_n(x)=\lambda_n(x')$. 
\item For every $n\in\N$, 
if $x,x'\notin R_n[U_n]$, then 
$\lambda_n(x)=\lambda_n(x')$. 
\item For every $n\in\N$ and $y\in Y$, 
there exists $x\in R_n[y]$ such that 
if $(x,x')\in R_{n-1}$, then $\lambda_n(x')=\lambda_n(y)$. 
\item For every $n\in\N\setminus\{1\}$ and $y\in U_n$, we have 
\[
\min_{v\in V_{n-1}}\lvert E(v_0,v)\rvert\times
\lvert\{x\in R_n[y]\cap U_n\mid\lambda_n(x)=\lambda_n(y)\}\rvert
\leq\lvert\{x\in R_n[y]\mid\lambda_n(x)=\lambda_n(y)\}\rvert. 
\]
\item For every $n\in\N$ and $x\in R_n[Y]$, 
there exists $y\in Y$ such that $(x,y)\in R_n$ and 
$\lambda_n(x)=\lambda_n(y)$. 
\end{enumerate}
\end{lem}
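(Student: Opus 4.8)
The plan is to construct the triples $(K_n,\lambda_n,U_n)$ by induction on $n$, building the label maps on $Y$ and off $Y$ separately and gluing them through the surjections $\rho_w$ produced above. The sets $U_n$ are the easy part. Since $Y=\{(e_k)_k : e_k\in F\text{ for all }k\}$ is the decreasing intersection of the clopen cylinder sets $C_N=\{x : x_k\in F\text{ for }1\le k\le N\}$, I would set $U_n=C_{p(n)}$ for a rapidly increasing sequence $p(n)\ge n$, which gives (2) at once. For (3), a direct computation shows $R_m[U_m]=\{x : x_k\in F\text{ for }m<k\le p(m)\text{ and }r(x_m)\in W_m\}$; letting the intervals $(m,p(m)]$ exhaust $(n,\infty)$ then forces $\bigcap_{m\ge n}R_m[U_m]=\{x : x_k\in F\text{ for all }k>n\}=R_n[Y]$, where one uses that every $w\in W_m$ is joined to $v_0$ by an $F$-path so the early coordinates can always be pushed into $F$. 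Property (5) is arranged by fiat: outside the clopen set $R_n[U_n]$ I declare $\lambda_n$ to take a single distinguished value.

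On $Y$ the map $\lambda_n$ is essentially forced by (1). Applying Lemma \ref{proper} to the inclusion $S_n\subset R_n|Y$ of compact \'etale relations yields a finite set and a continuous map realising $S_n$ as a level set; after telescoping I may take this map to depend on the first $n$ coordinates only, so that it assigns to each $F$-path in a block $F(v_0,w)$, $w\in W_n$, its $S_n$-class. Because $S_{n-1}\subset S_n$, two $Y$-points that are $R_{n-1}$-equivalent with equal $\lambda_{n-1}$-value lie in $S_{n-1}\subset S_n$ and hence receive equal $\lambda_n$-value; this is (4) restricted to $Y$, and I would propagate it to all of $X$ by defining $\lambda_n$, on the set where the $n$-th edge lies in $F$, as a function of $\lambda_{n-1}$ together with that edge and the tail.

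The substance of the lemma is the extension of $\lambda_n$ across the non-$F$ paths, and this is where $\rho_w$ enters. Fix a section $\sigma_w\colon F(v_0,w)\to\{e\in E\setminus F : r(e)=w\}$ with $\rho_w\circ\sigma_w=\mathrm{id}$. For each $F$-path $f$ of the block, with $S_n$-label $c_f$, I copy $c_f$ to every level-$n$ path whose $n$-th edge equals $\sigma_w(f)$; distinct $f$ give distinct edges, so these families are disjoint, each contributes $|E(v_0,s(\sigma_w(f)))|\ge\min_{v\in V_{n-1}}|E(v_0,v)|$ points, and all of them lie outside $U_n$. Counting labels in the $R_n$-class of a point $y\in U_n$ then gives, for $c=\lambda_n(y)$, a total count at least $\min_{v\in V_{n-1}}|E(v_0,v)|$ times the $U_n$-count, which is exactly (7). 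Letting $x$ run through the paths carrying $n$-th edge $\sigma_w(f_y)$, where $f_y$ is the $F$-path of $y$, produces a whole $R_{n-1}$-class with constant label $\lambda_n(y)$, giving (6); and since every copied label $c_f$ is realised by the $Y$-point $f$ in the same $R_n$-class, (8) holds on non-$F$ edges, while on $F$ edges it is inherited from level $n-1$ through the factoring used for (4). The remaining non-$F$ paths are assigned any label already realised on a $Y$-point of their block, which preserves (8) and can only help (7); continuity of $\lambda_n$ is clear since every choice depends on finitely many coordinates.

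The step I expect to be the main obstacle is making all of this simultaneously consistent, and in particular threading (4) through the off-$Y$ extension while keeping the counting in (7) tight. The point is that on the non-$F$ paths the label is dictated edge-by-edge, hence is constant on each $R_{n-1}$-class and makes (4) automatic there, whereas on the $F$-edge part it must factor through $\lambda_{n-1}$; one has to verify that these two prescriptions agree on the $F$-block and reproduce the $S_n$-labelling. The quantitative heart is that the inequality $|F(v_0,w)|\le\sum_{v\in V_{n-1}}|E(v,w)\setminus F(v,w)|$ secured in the previous lemma is precisely the assertion that $\rho_w$ can be chosen surjective; its section then replicates every $Y$-label at least $\min_{v\in V_{n-1}}|E(v_0,v)|$ times, which is the exact multiplicity demanded by (7). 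Once the edge-level and $\lambda_{n-1}$-level prescriptions are reconciled, properties (1)--(8) fall out of the bookkeeping.
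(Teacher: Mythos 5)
Your overall architecture is in fact the same as the paper's --- labels on $Y$ obtained from Lemma \ref{proper}, cylinder neighbourhoods $U_n$ of $Y$, extension across non-$F$ edges via sections of the surjections $\rho_w$, and the multiplicity count for (7) --- but the linchpin of your version is the sentence ``after telescoping I may take this map to depend on the first $n$ coordinates only,'' and that step, as you justify it, does not go through. Lemma \ref{proper} gives $\mu_n:Y\to K_n$ with $S_n=\{(y,y')\in R_n|Y\mid\mu_n(y)=\mu_n(y')\}$; continuity does imply that $\mu_n$ factors through the first $m_n$ coordinates for some $m_n\geq n$, but telescoping level $n$ out to level $m_n$ replaces $R_n$ by the larger relation $R_{m_n}$, and the level-set description of $S_n$ need not survive this: a pair $(y,y')\in R_{m_n}|Y$ which differs somewhere in coordinates $n+1,\dots,m_n$ can satisfy $\mu_n(y)=\mu_n(y')$ while lying outside $S_n$ (which is contained in $R_n|Y$). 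Concretely, if $S_1$ relates two paths differing only in the first edge exactly when the second edge equals a fixed $e^*$, the natural choice of $\mu_1$ (the first edge if the second edge is not $e^*$, a single extra symbol otherwise) realises $S_1$ inside $R_1|Y$, yet after merging levels $1$ and $2$ its level sets inside $R_2|Y$ strictly contain $S_1$. Everything downstream in your argument --- the well-definedness of your function $G$, property (1), and property (4) --- rests on this unproved claim, so this is a genuine gap rather than a routine omission.

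The claim itself is true, but it needs an argument you do not supply: since $S_n$ is clopen in the compact relation $R_n|Y$, membership of a pair in $S_n$ is determined by the two initial segments up to some level $m_n$; one must then \emph{rebuild} the label as the quotient of the finite set of $F$-paths of length $m_n$ by the relation ``some (equivalently, by determinedness, every) common tail extension lies in $S_n$,'' check transitivity, and check that its level sets inside the larger relation $R_{m_n}|Y$ recover exactly $S_n$ --- only then may one telescope. Note that the paper sidesteps this entirely: it never makes $\mu_n$ coordinate-determined, but instead takes an arbitrary continuous extension $\tilde{\mu}_n$ of $\mu_n$ to a cylinder set and proves, by a compactness argument exploiting $S_{n-1}\subset S_n$, that $U_n$ can be shrunk until $R_{n-1}$-equivalence together with equal $\lambda_{n-1}$-labels forces equal $\tilde{\mu}_n$-labels (condition \eqref{cat}); that is what makes the inductive propagation well defined. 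Either mechanism would complete the proof, but your proposal contains neither: you locate the difficulty correctly (``making all of this simultaneously consistent'') and then discharge it by assertion.
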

\begin{proof}
Since $S_n$ is contained in $R_n|Y$, 
by applying Lemma \ref{proper}, 
we get a finite set $K_n$ and a continuous map $\mu_n:Y\to K_n$ such that 
\begin{equation}
S_n=\{(y,y')\in R_n|Y\mid\mu_n(y)=\mu_n(y')\}. 
\label{dog}
\end{equation}
For $k\in\N$, we define 
\[
Y_k=\{(x_n)_n\in X\mid x_n\in F\text{ for all }n=1,2,\dots,k\}. 
\]
The clopen sets $Y_k$'s form a decreasing sequence and 
$\bigcap_kY_k=Y$. 
For $w\in W$, let $\rho_w$ be a surjective map 
from $\{e\in E\setminus F\mid r(e)=w\}$ to $F(v_0,w)$ as above. 

First of all, let us find $U_1$ and $\lambda_1:X\to K_1$. 
Put $U_1=Y_2$. Then (2) for $n=1$ is clear. 
Let $\tilde{\mu}_1:U_1\to K_1$ be an arbitrary continuous extension 
of $\mu_1:Y\to K_1$. 
For $x\in U_1$, we define $\lambda_1(x)=\tilde{\mu}_1(x)$ 
This, together with \eqref{dog}, implies (1) for $n=1$. 
On $X\setminus R_1[U_1]$, we fix an element of $K_1$ and 
let $\lambda_1$ be the constant map to this element, 
so that (5) is satisfied. 
Suppose that $x$ is in $R_1[U_1]\setminus U_1$. 
Let $x_k$ denote the $k$-th edge of the infinite path $x\in X$. 
It is easy to see $x_1\notin F$ and $r(x_1)\in W$. 
Since $x_2\in F$, 
\[
\tilde{x}=(\rho_{r(x_1)}(x_1),x_2,x_3,\dots)\in X
\]
belongs to $U_1$. 
Hence we can define $\lambda_1(x)=\lambda_1(\tilde{x})$. 
One observes that $\lambda_1:X\to K_1$ is continuous. 
To check (8), let $x\in R_1[Y]\setminus Y$. 
From $x_1\notin F$, $r(x_1)\in W$ and $x_2\in F$, 
we can see that $x$ belongs to $R_1[U_1]\setminus U_1$. 
Obviously, $\tilde{x}=(\rho_{r(x_1)}(x_1),x_2,x_3,\dots)$ is in $Y$, 
and so (8) for $n=1$ follows. 

We would like to construct $U_n$ and $\lambda_n:X\to K_n$ 
inductively. 
Let us assume that $U_{n-1}$ and $\lambda_{n-1}$ have been fixed. 
Let $\tilde{\mu}_n:Y_{n+1}\to K_n$ be 
an arbitrary continuous extension of $\mu_n:Y\to K_n$. 
We claim that there exists $k>n$ such that 
if $x,x'\in Y_k$ satisfies 
$(x,x')\in R_{n-1}$ and $\lambda_{n-1}(x)=\lambda_{n-1}(x')$, then 
$\tilde{\mu}_n(x)=\tilde{\mu}_n(x')$. 
Otherwise, for each $k>n$, we would have $x(k),x'(k)\in Y_k$ with 
$(x(k),x'(k))\in R_{n-1}$, $\lambda_{n-1}(x(k))=\lambda_{n-1}(x'(k))$ and 
$\tilde{\mu}_n(x(k))\neq\tilde{\mu}_n(x'(k))$. 
We may assume that 
two sequences $x(k),x'(k)$ converge to $y,y'\in Y$, respectively, 
because $X$ is compact and $\bigcap Y_k=Y$. 
By compactness of $R_{n-1}$, we also have $(y,y')\in R_{n-1}$. 
Combining this with $\lambda_{n-1}(y)=\lambda_{n-1}(y')$, 
by (1) for $n-1$, we get $(y,y')\in S_{n-1}$. 
On the other hand, by \eqref{dog} and $\mu_n(y)\neq\mu_n(y')$, 
$(y,y')$ does not belong to $S_n$, 
which contradicts $S_{n-1}\subset S_n$. 
Hence we can find $k>n$ 
which has the desired property. 
We put $U_n=Y_k$, so that 
\begin{equation}
(x,x')\in R_{n-1}|U_n\text{ and }
\lambda_{n-1}(x)=\lambda_{n-1}(x')
\quad\Rightarrow\quad\tilde{\mu}_n(x)=\tilde{\mu}_n(x'). 
\label{cat}
\end{equation}
Notice that $Y$ is contained in $U_n$ and 
$U_n$ is contained in $Y_{n+1}$. 

Next, we would like to define a continuous map $\lambda_n:X\to K_n$.
Fix an element $\kappa_0\in K_n$. 
Let $x\in R_{n-1}[U_n]$. 
If there exists $x'\in U_n$ such that 
$(x,x')\in R_{n-1}$ and $\lambda_{n-1}(x)=\lambda_{n-1}(x')$, 
then we define $\lambda_n(x)=\tilde{\mu}_n(x')$. 
This is well-defined because of \eqref{cat}. 
If there does not exist such $x'\in U_n$, 
then we define $\lambda_n(x)=\kappa_0$. 
Notice that this definition implies (1) and (4) for $x,x'\in R_{n-1}[U_n]$. 
For $x\notin R_n[U_n]$, we define $\lambda_n(x)=\kappa_0$, 
so that (5) is satisfied. 
Suppose that $x$ is in $R_n[U_n]\setminus R_{n-1}[U_n]$. 
Let $x_k\in E$ denote the $k$-th edge of $x$. 
From $x\notin R_{n-1}[U_n]$, we can see that $x_n\in E\setminus F$. 
Since $x$ is in $R_n[U_n]$, we also get $r(x_n)\in W$. 
By definition of $\rho_{r(x_n)}$, 
$\rho_{r(x_n)}(x_n)$ is in $F(v_0,r(x_n))$. 
It follows that 
\[
\tilde{x}=(\rho_{r(x_n)}(x_n),x_{n+1},x_{n+2},\dots)
\]
belongs to $U_n$. 
Therefore we can define $\lambda_n(x)=\lambda_n(\tilde{x})$. 
We remark that, by definition, 
if $x,x'\in R_n[U_n]\setminus R_{n-1}[U_n]$ and $(x,x')\in R_{n-1}$, 
then $x_n=x'_n$, and hence $\rho_{r(x_n)}(x_n)=\rho_{r(x'_n)}(x'_n)$. 
Therefore $\lambda_n(x)=\lambda_n(x')$. 
Thus (4) for $x,x'\in R_n[U_n]\setminus R_{n-1}[U_n]$ is satisfied. 

Let us check (6). Take $y\in Y$. 
By the surjectivity of $\rho_{r(y_n)}$, 
there exists $e\in E\setminus F$ such that $r(e)=r(y_n)$ and 
\[
\rho_{r(y_n)}(e)=(y_1,y_2,\dots,y_n). 
\]
Take an infinite path $x\in X$ such that 
$x_n=e$ and $x_k=y_k$ for all $k>n$. 
It is easy to see that $x$ has the desired property. 

We next verify (7). Take $y\in U_n$. 
Since $U_n$ is contained in $Y_{n+1}$, 
by the same argument as above, 
we can choose $e\in E\setminus F$ such that $r(e)=r(y_n)$ and 
\[
\rho_{r(y_n)}(e)=(y_1,y_2,\dots,y_n). 
\]
Put 
\[
P_y=\{x\in X\mid x_n=e, \ x_k=y_k\text{ for all }k>n\}. 
\]
Notice that $\lvert P_y\rvert$ equals $\lvert E(v_0,s(e))\rvert$. 
It is clear that $(x,y)$ belongs to $R_n$ for every $x\in P_y$. 
From the definition of $\lambda_n$, 
we have $\lambda_n(x)=\lambda_n(y)$ for every $x\in P_y$. 
It is also clear that $x\notin U_n$ for any $x\in P_y$, 
because $e$ is in $E\setminus F$. 
Finally, if $y,y'\in U_n$ are distinct, then $P_y$ does not meet $P_{y'}$. 
This completes the proof of (7). 

Let us consider (8). Take $x\in R_n[Y]$. 
If $x$ is in $R_{n-1}[Y]$, then by the induction hypothesis 
there exists $y\in Y$ such that 
$(x,y)\in R_{n-1}$ and $\lambda_{n-1}(x)=\lambda_{n-1}(y)$. 
It follows from (4) that $\lambda_n(x)$ is equal to $\lambda_n(y)$. 
Suppose $x\in R_n[Y]\setminus R_{n-1}[Y]$, 
which means $x_n\in E\setminus F$ and $x_k\in F$ for all $k>n$. 
Thus, $x\in R_n[U_n]\setminus R_{n-1}[U_n]$. 
As before, 
we put $\tilde{x}=(\rho_{r(x_n)}(x_n),x_{n+1},x_{n+2},\dots)$. 
Then, $\tilde{x}$ belongs to $Y$ and 
$(x,\tilde{x})\in R_n$, $\lambda_n(x)=\lambda_n(\tilde{x})$. 

In this way, we can find $U_n$ and $\lambda_n:X\to K_n$ 
for every $n\in\N$. 
Finally, let us check (3). 
Since $U_m$ contains $Y$, 
$\bigcap_{m\geq n}R_m[U_m]\supset R_n[Y]$ is clear. 
By the construction of $U_m$, for every $m\in\N$, 
\[
R_m[U_m]\subset\{x\in X\mid x_{m+1}\in F\}. 
\]
As an immediate consequence, we have 
\[
\bigcap_{m\geq n}R_m[U_m]
\subset\{x\in X\mid x_m\in F\text{ for all }m>n\}
=R_n[Y]. 
\]
\end{proof}

Now we are ready to prove Theorem \ref{main1}. 

\begin{proof}[Proof of Theorem \ref{main1}]
Let $K_n$, $\lambda_n:X\to K_n$ and $U_n$ be as in the lemma above. 
Define 
\[
R_n'=\{(x,x')\in R_n\mid\lambda_n(x)=\lambda_n(x')\}
\]
for every $n\in\N$. 
It is clear that $R_n'$ is an open subset of $R_n$. 
Moreover, by (4) of Lemma \ref{KlmdU}, 
$R_{n-1}'$ is contained in $R_n'$. 
Put $R'=\bigcup R_n'$. 
Evidently $R'$ is an equivalence relation and 
an open subset of $R$. 
By (1) of Lemma \ref{KlmdU}, we have $R'|Y=S$. 

Let us show $R'[Y]=R[Y]$. 
Take $x\in R[Y]$. 
There exists $n\in\N$ such that $x\in R_n[Y]$. 
By (8) of Lemma \ref{KlmdU}, 
there exists $y\in Y$ such that 
$(x,y)\in R_n$ and $\lambda_n(x)=\lambda_n(y)$. 
Hence we get $(x,y)\in R_n'$, 
which means that $x$ is in $R'[Y]$. 

We would like to show condition (5) of Theorem \ref{main1}. 
Suppose that $x$ is not in $R[Y]$. 
In order to prove $R'[x]=R[x]$, take $x'\in R[x]$. 
We can find $n\in\N$ such that $(x,x')\in R_n$. 
By (3) of Lemma \ref{KlmdU}, 
there exists $m\geq n$ such that $R_m[U_m]$ does not contain $x$. 
Also, clearly $x'\notin R_m[U_m]$. 
It follows from (5) of Lemma \ref{KlmdU} 
that $\lambda_m(x)$ is equal to $\lambda_m(x')$. 
By definition of $R_m'$, we get $(x,x')\in R_m'$. 
Therefore $R'[x]=R[x]$. 

We now consider the minimality of $R'$. 
Take $x\in X$. 
We must show that $R'[x]$ is dense in $X$. 
If $x$ is not in $R[Y]$, as shown in the last paragraph, 
$R'[x]$ is equal to $R[x]$. 
Since $R$ is minimal, $R[x]$ is dense in $X$. 
Hence we may assume that $x$ is in $R[Y]$. 
As shown above, $R[Y]=R'[Y]$. 
It follows that we can find $y\in Y$ such that $(x,y)\in R'$. 
Take a non-empty open subset $O\subset X$ arbitrarily. 
The minimality of $R$ implies $R[O]=X$. 
Since $X$ is compact and $R[O]=\bigcup R_n[O]$, 
we can find $n\in\N$ such that $R_n[O]=X$. 
By (6) of Lemma \ref{KlmdU}, 
there exists $z\in R_{n+1}[y]$ such that 
$R_n[z]\subset R_{n+1}'[y]$. 
From $(x,y)\in R'$, we have $R_n[z]\subset R'[x]$. 
Combining this with $R_n[O]=X$, 
we can conclude that $R'[x]$ meets $O$, 
which implies $R'[x]$ is dense in $X$. 

It remains for us to show the last condition. 
To do that, we would like to show that $Y$ is $R'$-thin. 
From (7) of Lemma \ref{KlmdU}, for every $y\in U_n$, we have 
\[
\min_{v\in V_{n-1}}\lvert E(v_0,v)\rvert
\times\lvert R_n'[y]\cap U_n\rvert\leq\rvert R_n'[y]\rvert. 
\]
Notice that $R_n'$ is a compact relation. 
It follows that 
\[
\mu(U_n)\leq
\left(\min_{v\in V_{n-1}}\lvert E(v_0,v)\rvert\right)^{-1}
\]
for every $R'$-invariant probability measure $\mu$. 
The right-hand side converges to zero, 
because $R$ is minimal. 
Since $U_n$ contains $Y$, we get $\mu(Y)=0$. 

Let us show that 
any $R'$-invariant probability measure on $X$ is $R$-invariant. 
Let $\mu$ be an $R'$-invariant probability measure and 
let $\gamma:O_1\to O_2$ be a homeomorphism 
between clopen subsets $O_1,O_2\subset X$ such that 
$(x,\gamma(x))\in R$ for every $x\in O_1$, 
i.e. $\gamma$ is a graph in $R$. 
It suffices to show $\mu(O_1)=\mu(O_2)$. 
Since $Y$ is $R'$-thin and $R'[Y]=R[Y]$, 
we have $\mu(O_1)=\mu(O_1\setminus R[Y])$ 
and $\mu(O_2)=\mu(O_2\setminus R[Y])$. 
Clearly $\gamma(O_1\setminus R[Y])=O_2\setminus R[Y]$ and 
$(x,\gamma(x))\in R'$ for any $x\in O_1\setminus R[Y]$. 
Hence we get $\mu(O_1\setminus R[Y])=\mu(O_2\setminus R[Y])$, 
and so $\mu(O_1)$ is equal to $\mu(O_2)$. 
\end{proof}

\section{An absorption theorem}

In this section, by using Theorem \ref{main1}, 
we would like to prove the main theorem. 
We begin with a lemma. 

\begin{picture}(250,220)
\put(150,180){\circle*{5}}
\qbezier(70,30)(100,150)(150,180)
\qbezier(230,30)(200,150)(150,180)
\qbezier(110,30)(120,150)(150,180)
\qbezier(120,30)(130,150)(150,180)
\qbezier(190,30)(180,150)(150,180)
\qbezier(200,30)(190,150)(150,180)
\put(135,190){$(V,E)$}
\put(95,15){$(W_1',F_1')$}
\put(175,15){$(W_0,F_0)$}
\end{picture}

\begin{lem}\label{copy}
Let $R\subset X\times X$ be a minimal AF equivalence relation 
on a Cantor set $X$ and 
let $Y\subset X$ be a closed, $R$-\'etale and $R$-thin subset. 
Let $Z$ be a compact metrizable totally disconnected space 
and let $Q\subset Z\times Z$ be an AF equivalence relation on $Z$. 
Then, there exists a continuous map $\pi:Z\to X$ such that 
the following are satisfied. 
\begin{enumerate}
\item $\pi$ is a homeomorphism from $Z$ to $\pi(Z)$. 
\item $\pi(Z)$ is a closed, $R$-\'etale and $R$-thin subset. 
\item $\pi(Z)$ does not meet $R[Y]$. 
\item $\pi\times\pi$ gives a homeomorphism 
from $Q$ to $R\cap(\pi(Z)\times \pi(Z))$. 
\end{enumerate}
\end{lem}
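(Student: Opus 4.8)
The plan is to transport everything to Bratteli diagrams and realize a copy of $(Z,Q)$ inside $(X,R)$ by a telescoped ``block coding.'' First I would invoke Theorem 3.11 of \cite{GPS2} to represent $R$ by a standard Bratteli diagram $(V,E)$ with a subdiagram $(W,F)$ coding $Y$, identifying $X=X_{(V,E)}$, identifying $Y$ with the all-$F$ paths, and hence $R[Y]$ with the set of paths $x$ satisfying $x_n\in F$ for all large $n$. Minimality of $R$ makes $(V,E)$ simple, and $R$-thinness of $Y$ makes $(W,F)$ a thin subdiagram. On the other side, by Theorem 3.9 of \cite{GPS2} I fix a standard Bratteli diagram $(V',E')$ with $Q\cong AF(V',E')$ and $Z=X_{(V',E')}$.

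The construction is a telescoping $0=m_0<m_1<\dots$ of $(V,E)$ together with, for each $n$, an arbitrary map $\alpha_n\colon V'_n\to V_{m_n}$ (with $\alpha_0(v'_0)=v_0$) and an injective assignment $\beta_n$ sending each edge $e'\in E'_n$ to a path $\beta_n(e')\in E(\alpha_{n-1}(s(e')),\alpha_n(r(e')))$ of length $m_n-m_{n-1}$ that contains at least one edge outside $F$. Since $r(e'_n)=s(e'_{n+1})$ in $(V',E')$, ranges and sources of successive codewords match, so defining $\pi$ to send $(e'_1,e'_2,\dots)\in Z$ to the concatenation $\beta_1(e'_1)\beta_2(e'_2)\cdots\in X$ produces a valid, continuous, injective map. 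The key feasibility point is that such $\beta_n$ exist: by simplicity the number of paths between any two vertices grows without bound as $m_n-m_{n-1}$ increases, while Lemma 4.12 of \cite{GPS2} (thinness of $(W,F)$) forces the all-$F$ paths to be a negligible proportion, so for $m_n$ large there are more than $\lvert E'_n\rvert$ non-$F$ paths between each relevant pair of vertices and an injective $\beta_n$ with the prescribed endpoints can be chosen.

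Given this, properties (1)--(4) follow. Every codeword carries a non-$F$ edge, so each $\pi(z)$ has $x_n\notin F$ for infinitely many $n$ and hence lies outside $R[Y]$, giving (3); compactness of $Z$ makes $\pi(Z)$ closed. For (4) I would verify both inclusions in $(\pi\times\pi)(Q)=R\cap(\pi(Z)\times\pi(Z))$. If $(z,z')\in Q$, then $z,z'$ agree from some $(V',E')$-level on, so their images agree from the corresponding grid level $m_N$ on and are $R$-equivalent. Conversely---and this is the heart of the argument---if $\pi(z),\pi(z')$ agree in $X$ from some level on, then because both images are concatenations of codewords on the \emph{same} grid $\{m_n\}$, every full segment lying past the agreement level consists of a single codeword that must coincide for the two paths; injectivity of each $\beta_n$ then forces $e'_n={e'_n}'$ for all large $n$, so $(z,z')\in Q$. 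Matching the compact open pieces $Q_N$ (agreement beyond $(V',E')$-level $N$) with $R_{m_N}\cap(\pi(Z)\times\pi(Z))$ upgrades the continuous bijection to a homeomorphism of the inductive limits, yielding (4) and, on the diagonal, the embedding statement (1); since $Q$ is \'etale, this also shows $\pi(Z)$ is $R$-\'etale as required in (2).

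It remains to arrange $R$-thinness in (2), and this is where the telescoping must be tuned and where I expect the main difficulty of the bookkeeping to lie. For any $R$-invariant probability measure $\mu$, two finite paths with the same range are $R$-equivalent, so the $\mu$-measure of the cylinder over a single path to $v\in V_{m_n}$ is at most $\lvert E(v_0,v)\rvert^{-1}\le(\min_{w\in V_{m_n}}\lvert E(v_0,w)\rvert)^{-1}$; since $\pi(Z)$ meets level $m_n$ in at most $P_n:=\sum_{v'\in V'_n}\lvert E'(v'_0,v')\rvert$ cylinders, one gets $\mu(\pi(Z))\le P_n\bigl(\min_{w\in V_{m_n}}\lvert E(v_0,w)\rvert\bigr)^{-1}$. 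I would choose the levels inductively so that, once $(V',E')$ has been embedded through level $n-1$ (which fixes $P_n$), the next level $m_n$ is taken large enough---possible by simplicity---that this ratio is small; letting $n\to\infty$ gives $\mu(\pi(Z))=0$ for every invariant $\mu$. The real obstacle is thus not any isolated step but making the three demands on the codewords simultaneously compatible: each $\beta_n$ must be injective with prescribed endpoints, every codeword must contain a non-$F$ edge, and the path counts must dominate $P_n$. All three are secured by the single device of telescoping far enough that simplicity supplies an abundance of paths while Lemma 4.12 confines the all-$F$ paths to a vanishing proportion, so the genuine content is the verification that the \emph{shared global grid} makes tail-equivalence in $X$ reflect back to tail-equivalence in $Z$.
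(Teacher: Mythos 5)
Your proposal is correct, and its skeleton is the same as the paper's: represent $R$ by a simple standard Bratteli diagram $(V,E)$ with a thin subdiagram coding $Y$ (Theorem 3.11 of \cite{GPS2}), represent $Q$ by a standard diagram (Theorem 3.9 of \cite{GPS2}), and plant a copy of the latter inside a telescoped version of $(V,E)$ that avoids the subdiagram coding $Y$, with thinness of the image controlled via Lemma 4.12 of \cite{GPS2}. The difference is in how the copy is realized. The paper transforms $(V,E)$ by telescopings \emph{and microscopings} so that a replica $(W_1',F_1')$ of $Q$'s diagram sits inside $(V,E)$ as a genuine subdiagram disjoint from the one coding $Y$; properties (1)--(4) are then essentially quotations of the subdiagram machinery of \cite{GPS2}. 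You avoid microscoping entirely: you keep the vertex levels of $(V,E)$, allow the vertex maps $\alpha_n$ to be non-injective, and compensate with globally injective edge codings $\beta_n$ on a common grid $\{m_n\}$. The price is that $\pi(Z)$ need not be the path space of a subdiagram (non-injective $\alpha_n$ can produce concatenations of codewords that are not images of paths of $Q$'s diagram), so Theorem 3.11 of \cite{GPS2} cannot be invoked for (2) and (4) and you must verify them by hand --- which your shared-grid argument does: tail agreement of images forces agreement of entire codewords past some block, and injectivity of $\beta_n$ pulls this back to tail equivalence in $Z$. Your version buys explicitness exactly where the paper says ``the other properties can be verified easily''; the paper's buys brevity and reusable subdiagram structure. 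Two points you should tighten: (i) for the homeomorphism claim in (4) and for $R$-\'etaleness in (2), after matching the compact pieces you still need that the relative topology from $R$ on $R\cap(\pi(Z)\times\pi(Z))$ agrees with the inductive limit topology of the pieces $R_{m_N}\cap(\pi(Z)\times\pi(Z))$; this holds because $R\cap(\pi(Z)\times\pi(Z))$ is closed in $R$ and $R$ carries the inductive limit topology of the compact sets $R_{m_N}$, but it deserves a sentence. (ii) The unboundedness of the path counts $\lvert E(v,w)\rvert$, which feeds both the existence of enough non-$F$ codewords and your estimate $\mu(\pi(Z))\le P_n\bigl(\min_{w\in V_{m_n}}\lvert E(v_0,w)\rvert\bigr)^{-1}$, requires minimality of $R$ (equivalently, simplicity together with $X$ being a Cantor set, so that all orbits are infinite), not simplicity of the diagram alone.
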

\begin{proof}
As in the last section, we may assume that 
there exist a simple standard Bratteli diagram $(V,E)$ and 
its thin subdiagram $(W_0,F_0)$ such that 
the AF equivalence relation $R$ on $X$ is represented by $(V,E)$ 
and $R|Y$ corresponds to $(W_0,F_0)$. 
Similarly, by \cite[Theorem 3.9]{GPS2}, we may assume that 
$Q\subset Z\times Z$ is represented 
by another standard Bratteli diagram $(W_1,F_1)$. 

We now transform the Bratteli diagram $(V,E)$ 
by a succession of telescopings and microscopings 
so that the resulting diagram, which we again denote by $(V,E)$, 
can be described as follows (see also the figure). 
There are two disjoint thin subdiagrams of $(V,E)$. 
One is the subdiagram which is transformed from $(W_0,F_0)$ above, 
and we retain the notation for it. 
The other thin subdiagram is a replica of $(W_1,F_1)$, 
and we denote it by $(W'_1,F'_1)$. 

Let $\pi$ denote the canonical homeomorphism 
from the infinite path space on $(W_1,F_1)$, 
which is identified with $Z$, 
to the infinite path space on $(W'_1,F'_1)$, 
which is identified with a closed thin subset of $X$. 
Since $(W_0,F_0)$ and $(W'_1,F'_1)$ are disjoint, 
$\pi(Z)$ does not meet $R[Y]$. 
The other properties can be verified easily. 
\end{proof}

We are now ready to give a proof of the main result. 
For \'etale equivalence relations $Q$ and $R$, 
we say that $Q$ is an \'etale extension of $R$, 
if $Q$ contains $R$ and 
the inclusion map from $R$ to $Q$ is continuous. 

\begin{thm}\label{main2}
Let $R\subset X\times X$ be a minimal AF equivalence relation 
on a Cantor set $X$ and 
let $Y\subset X$ be a closed, $R$-\'etale and $R$-thin subset. 
Suppose that 
an AF equivalence relation $Q\subset Y\times Y$ is 
an \'etale extension of $R|Y$. 
Then we can find a homeomorphism $h:X\to X$ such that 
the following are satisfied. 
\begin{enumerate}
\item $h\times h(R\vee Q)=R$, where $R\vee Q$ is 
the equivalence relation generated by $R$ and $Q$. 
\item $h(Y)$ is a closed, $R$-\'etale and $R$-thin subset. 
\item $h|Y\times h|Y$ gives a homeomorphism 
from $Q$ to $R|h(Y)$. 
\end{enumerate}
In particular, $R\vee Q$ is affable. 
\end{thm}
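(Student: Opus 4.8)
The plan is to reduce the theorem to a single \emph{relative isomorphism problem} for minimal AF relations, feeding it by the copy lemma and the splitting theorem, and then to transport $R\vee Q$ onto $R$ by conjugation. First I would manufacture inside $(X,R)$ a faithful copy of the abstract extension $R|Y\subset Q$. Applying Lemma \ref{copy} to the AF relation $Q$ (viewed as an AF relation on the Cantor set $Y$), I obtain an embedding $\pi:Y\to X$ whose image $Y_1:=\pi(Y)$ is closed, $R$-\'etale and $R$-thin, is disjoint from $R[Y]$, and satisfies $(\pi\times\pi)(Q)=R|Y_1$. Setting $S_1:=(\pi\times\pi)(R|Y)$, the fact that $R|Y$ is an open subrelation of $Q$ (which is precisely what ``\'etale extension'' means) together with the fact that $\pi\times\pi$ is a homeomorphism forces $S_1$ to be an open subrelation of $R|Y_1$, isomorphic to $R|Y$ via $\pi$.

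Next I would split $R$ along $Y_1$. Applying Theorem \ref{main1} to the triple $(R,Y_1,S_1)$ yields a minimal AF relation $R'$ which is open in $R$, satisfies $R'|Y_1=S_1$, shares its invariant probability measures with $R$, and for which $R=R'\vee(R|Y_1)$ (the identity recorded in the remark following Theorem \ref{main1}). Since $Y_1\cap R[Y]=\emptyset$ forces $Y\cap R[Y_1]=\emptyset$ by symmetry of $R$-equivalence, property (5) of Theorem \ref{main1} gives $R'|Y=R|Y$: on $Y$ nothing has been disturbed, while on $Y_1$ the relation has been thinned from $R|Y_1\cong Q$ down to $S_1\cong R|Y$.

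The decisive step is then to produce a homeomorphism $g:X\to X$ with $(g\times g)(R)=R'$ and $g|Y=\pi$, and to set $h:=g$. Granting such a $g$, the verification is immediate: because $Q\subset Y\times Y$ and $g|Y=\pi$ we have $(g\times g)(Q)=(\pi\times\pi)(Q)=R|Y_1$, whence
\[
(h\times h)(R\vee Q)=(g\times g)(R)\vee(g\times g)(Q)=R'\vee(R|Y_1)=R,
\]
which is (1); moreover $h(Y)=Y_1$ is closed, $R$-\'etale and $R$-thin by Lemma \ref{copy}, giving (2), and $(h|Y\times h|Y)(Q)=R|Y_1=R|h(Y)$ gives (3). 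Affability is then free, since $h$ carries $R\vee Q$ onto the AF relation $R$.

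The hard part will be constructing $g$, i.e.\ proving the relative isomorphism statement that a prescribed conjugacy $\pi$ between the thin \'etale subrelations $R|Y$ and $R'|Y_1=S_1$ of the two minimal AF relations extends to a global isomorphism $R\cong R'$. I would establish this at the level of Bratteli diagrams: represent $R$ by a simple standard diagram carrying the two disjoint thin subdiagrams associated with $Y$ and $Y_1$ (as in the figure preceding Lemma \ref{copy}), observe that the splitting construction behind Theorem \ref{main1} realizes $R'$ by a diagram that differs from that of $R$ only in the edges feeding the $Y_1$-subdiagram, and then build $g$ as the inductive limit of level-wise bijections that implement $\pi$ on the $Y$-subdiagram and are essentially the identity elsewhere. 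Arranging these finite-level bijections to be simultaneously compatible with the telescopings, with the prescribed map $\pi$, and with the matching $S_1\cong R|Y$ is exactly where the real work lies; here the coincidence of invariant measures from property (6) of Theorem \ref{main1} is what keeps the two diagrams abstractly isomorphic.
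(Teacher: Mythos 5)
Your reduction (apply Lemma \ref{copy} to get a disjoint copy $Y_1$ of $(Y,Q)$, then split $R$ along $Y_1$ via Theorem \ref{main1} so that $R'|Y_1\cong R|Y$ and $R=R'\vee(R|Y_1)$) is sound as far as it goes, and your verification of (1)--(3) \emph{granting} the map $g$ is correct. But the ``decisive step'' is a genuine gap, and it is not merely hard --- it is asking for something that is false in general. You need a homeomorphism $g:X\to X$ with $(g\times g)(R)=R'$, i.e.\ an isomorphism between the two \emph{different} \'etale relations $R$ and $R'$. No tool in the paper or in \cite{GPS2} provides this: the extension result \cite[Lemma 4.15]{GPS2} only extends an isomorphism between restrictions of a \emph{single} minimal AF relation to two closed thin \'etale subsets, to a self-homeomorphism preserving that one relation. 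Your justification --- that property (6) of Theorem \ref{main1} (coincidence of invariant measures) ``keeps the two diagrams abstractly isomorphic'' --- is incorrect: minimal AF relations are classified by their dimension groups, not by their invariant measures. For instance, the $2$-adic and $3$-adic cofinal relations each have a unique invariant measure, yet are non-isomorphic ($\Z[1/2]\not\cong\Z[1/3]$). The splitting relation $R'$ produced by Theorem \ref{main1} is a proper open subrelation of $R$ whose dimension group will in general differ from that of $R$ (typically acquiring new elements/infinitesimals), so no $g$ with $(g\times g)(R)=R'$ need exist; and even when it does, you would further need it to extend the prescribed map $\pi$ on $Y$, a strictly stronger relative statement.

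This is exactly the obstruction the paper's proof is designed to circumvent, and the mechanism is the one ingredient your proposal is missing: a Hilbert-hotel construction. Instead of one copy of $(Y,Q)$, the paper embeds $Z=(Y\times\N)\cup\{\infty\}$ carrying $\widetilde{Q}$ (countably many copies of $Q$), and takes $S\subset\widetilde{Q}$ to be countably many copies of $R|Y$. After splitting along $\pi(Z)$ via Theorem \ref{main1}, one has $R'|(Y\cup\pi(Z))\cong(R|Y)\cup S\cong S\cong R'|\pi(Z)$, where the middle isomorphism is the index shift $n\mapsto n+1$. The crucial point is that this shift is an isomorphism between restrictions of the \emph{same} relation $R'$ to two closed thin \'etale subsets, so \cite[Lemma 4.15]{GPS2} applies and extends it to $\tilde h$ with $(\tilde h\times\tilde h)(R')=R'$; the same shift simultaneously absorbs $Q\vee(\pi\times\pi(\widetilde{Q}))$ into $\pi\times\pi(\widetilde{Q})$, giving $(\tilde h\times\tilde h)(R\vee Q)=R'\vee(\pi\times\pi(\widetilde{Q}))=R$. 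If you replace your single-copy construction by this countable-copies-plus-shift argument, the rest of your write-up goes through essentially verbatim.
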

\begin{proof}
The proof idea is the same as in the proof of the absorption theorem 
\cite[Theorem 4.6]{GMPS2}, 
namely constructing countable disjoint replicas of $R|Y$, respectively $Q$, 
inside a ``big'' equivalence relation, 
and use the extension result \cite[Lemma 4.15]{GPS2}. 

Let $Z=(Y\times\N)\cup\{\infty\}$ be 
the one-point compactification of $Y\times\N$. 
Set 
\[
\widetilde{Q}=\{((y,n),(y',n))\in Z\times Z\mid
(y,y')\in Q,n\in\N\}\cup\{(\infty,\infty)\}. 
\]
Since $Q$ is an AF relation, 
there exists an increasing sequence of 
compact open subrelations $Q_n\subset Q$ such that $Q=\bigcup_{n\in\N}Q_n$. 
For every $n\in\N$, we put 
\[
\widetilde{Q}_n=\{((y,k),(y',k))\in Z\times Z\mid
(y,y')\in Q_n,k=1,2,\dots,n\}\cup\{(z,z)\mid z\in Z\}. 
\]
It is not so hard to see that 
$\widetilde{Q}_n$ is a compact \'etale relation on $Z$ 
with the relative topology from $Z\times Z$. 
In addition, we have $\widetilde{Q}_n\subset\widetilde{Q}_{n+1}$ and 
$\widetilde{Q}=\bigcup_n\widetilde{Q}_n$. 
It follows that $\widetilde{Q}$ is an AF equivalence relation 
with the inductive limit topology. 
By Lemma \ref{copy}, 
there exists a continuous map $\pi:Z\to X$ such that 
the following properties are satisfied. 
\begin{itemize}
\item $\pi$ is a homeomorphism from $Z$ to $\pi(Z)$. 
\item $\pi(Z)$ is a closed, $R$-\'etale and $R$-thin subset. 
\item $\pi(Z)$ does not meet $R[Y]$. 
\item $\pi\times\pi$ is a homeomorphism 
from $\widetilde{Q}$ to $R|\pi(Z)=R\cap(\pi(Z)\times\pi(Z))$. 
\end{itemize}
From the second and third conditions, 
it follows that 
$Y\cup\pi(Z)$ is also $R$-\'etale and $R$-thin. 

We define an equivalence relation $S$ on $Z$ by 
\[
S=\{((y,n),(y',n))\in\widetilde{Q}\mid (y,y')\in R\}
\cup\{(\infty,\infty)\}. 
\]
It is a routine matter to verify that 
$S$ is an open subrelation of $\widetilde{Q}$. 
Therefore $\pi\times\pi(S)$ is an open subrelation of 
$\pi\times\pi(\widetilde{Q})=R|\pi(Z)$. 
By Theorem \ref{main1}, 
there exists a minimal open subrelation $R'\subset R$ such that 
the following properties are satisfied. 
\begin{itemize}
\item $R'|\pi(Z)=\pi\times\pi(S)$. 
\item $R'[\pi(Z)]=R[\pi(Z)]$. 
\item If $x$ is not in $R[\pi(Z)]$, then $R'[x]=R[x]$. 
In particular, $R'|Y=R|Y$. 
\item Any $R'$-invariant probability measure on $X$ is $R$-invariant. 
\end{itemize}
Evidently $Y\cup\pi(Z)$ is $R'$-\'etale and $R'$-thin, 
and we have 
\[
R=R'\vee(R|\pi(Z))
=R'\vee(\pi\times\pi(\widetilde{Q}))
\]
and 
\[
R\vee Q=R'\vee Q\vee(\pi\times\pi(\widetilde{Q})). 
\]
It is also easy to see 
\begin{align*}
R'|(Y\cup\pi(Z))
&=(R'|Y)\cup(R'|\pi(Z)) \\
&=(R|Y)\cup(R'|\pi(Z)) \\
&\cong (R|Y)\cup S \\
&\cong S, 
\end{align*}
where the last homeomorphism is obtained by an obvious shift map 
sending $n$ to $n+1$, cf. definition of $S$. 
We define a homeomorphism $h:Y\cup\pi(Z)\to\pi(Z)$ 
by $h(y)=\pi(y,1)$ for $y\in Y$, $h(\pi(y,n))=\pi(y,n+1)$ for $(y,n)\in Z$ 
and $h(\pi(\infty))=\pi(\infty)$. 
Then 
\[
h\times h:R'|(Y\cup\pi(Z))
\to R'|\pi(Z)
\]
is a homeomorphism. 
Note also that $h\times h$ implements an isomorphism between 
$Q\vee(\pi\times\pi(\widetilde{Q}))$ 
(which is a relation on $Y\cup\pi(Z)$) and 
$\pi\times\pi(\widetilde{Q})$ (which is a relation on $\pi(Z)$). 
This is an immediate consequence of 
the definition of $\widetilde{Q}$ and $\pi$. 
By \cite[Lemma 4.15]{GPS2}, 
$h$ extends to a homeomorphism $\tilde{h}:X\to X$ 
such that $\tilde{h}\times\tilde{h}(R')=R'$. 
It is clear that $\tilde{h}\times\tilde{h}(R\vee Q)$ equals $R$. 
Besides, $h(Y)=\pi(Y\times\{1\})$ is $R$-\'etale and $R$-thin. 
We can also check that 
$\tilde{h}\times\tilde{h}$ induces a homeomorphism 
from $Q\subset Y\times Y$ to $R|\tilde{h}(Y)$,
which completes the proof. 
\end{proof}

\bigskip

\noindent\textbf{Acknowledgement.} \\
The author is grateful to Christian Skau for many helpful comments.

\end{document}